\definecolor{light}{gray}{0.9}
\definecolor{medium}{gray}{0.8}
\newtheorem{theorem}{Theorem}
\theoremstyle{definition}
\newtheorem{remark}[theorem]{Remark}
\numberwithin{equation}{section}
\def\ZZ{{\mathbb Z}}
\def\RR{{\mathbb R}}
\def\11{{\mathbb 1}}
\def\cC{{\mathscr C}}
\def\cP{{\mathscr P}}
\def\cU{{\mathscr U}}
\def\cQ{{\mathscr Q}}
\def\cB{{\mathscr B}}
\def\cT{{\mathscr T}}
\def\cK{{\mathscr K}}
\def\cE{{\mathscr E}}
\def\cF{{\mathscr F}}
\def\St{{\textup {St}}}
\def\Sg{{\textup {Sg}}}
\def\SgRev{{\textup {SgRev}}}
\def\relint{{\textup {relint}}}
\def\CW{\textup{CW}}
\def\vol{\operatorname{vol}}
\def\ttt#1{\texttt{#1}}
\let\epsilon=\varepsilon
\begin{document}
\title{Computations of volumes and Ehrhart series in four candidates elections}
\author[W. Bruns]{Winfried Bruns}
\address{Winfried Bruns\\ Universit\"at Osnabr\"uck\\ FB Mathematik/Informatik\\ 49069 Osna\-br\"uck\\ Germany}
\email{wbruns@uos.de}
\author[B. Ichim]{Bogdan Ichim}
\address{Bogdan Ichim\\ Institute of Mathematics ``Simion Stoilow'' of the Romanian Academy\\ C.P. 1-764\\ 010702 Bucharest\\ Romania}
\email{bogdan.ichim@imar.ro}
\author[C. S\"oger]{Christof S\"oger}
\address{Christof S\"oger\\ Universit\"at Osnabr\"uck\\ FB Mathematik/Informatik\\ 49069 Osna\-br\"uck\\ Germany}
\email{csoeger@uos.de}

\subjclass[2010]{52B20, 91B12}

\keywords{rational polytope, volume, Ehrhart series, social choice, Condorcet paradox, Borda paradox}

\maketitle

\begin{abstract}
We describe several experimental results obtained in four candidates
social choice elections. These include the Condorcet and Borda paradoxes, as well as the Condorcet efficiency of plurality voting with runoff. The computations are done by Normaliz. It finds precise probabilities as volumes of polytopes and counting functions encoded as Ehrhart series of polytopes.
\end{abstract}

\section{Introduction}

In \cite[p.\ 382]{LLS} Lepelley, Louichi and Smaoui state:
\begin{quotation}
Consequently, it is not possible to analyze
four candidate elections, where the total number of variables (possible
preference rankings) is 24. We hope that further developments of these algorithms
will enable the overcoming of this difficulty.
\end{quotation}

Normaliz \cite{Nmz} is a software tool that (in particular) may be used for the
computation of volumes and Ehrhart series of rational polytopes. In the
18 years of its existence it has found numerous
applications. For these, as well for its connections to several computer algebra systems see \cite{Nmz}. One of the driving
forces for the improvements of Normaliz in the past years was the desire to
solve the problems raised by Schürmann \cite{Sch}, that is to
compute the volumes and Ehrhart series of certain polytopes related to
social choice.

We believe that the recent development in the algorithms of Normaliz
and its offspring NmzIntegrate have (partially)
solved the problem raised by Lepelley, Louichi and Smaoui.
In \cite[p.\ 388]{BGS} Brandt, Geist and Strobel  write:
\begin{quotation}
To the best of our knowledge, Normaliz is the only program which is able to compute
polytopes corresponding to elections with up to four alternatives.
\end{quotation}
In this paper we present several results of our own computational
experiments with four candidates elections. The background of these experiments, motivated by social choice theory and its connection with the
polytope theory, may be found in Gehrlein and Lepelley \cite{GL},
Lepelley, Louichi and Smaoui, \cite{LLS}, Schürmann \cite{Sch} or Wilson and Pritchard \cite{WP}. For discrete convex geometry we refer the reader to Bruns and Gubeladze \cite{BG}.

As an introductory example we discuss the Condorcet paradox whose
probability of $331/2048\approx 16.2\%$ for four candidates has been known for quite a while
(Gehrlein \cite{G}). We continue with the Condorcet efficiency of
plurality voting (see \cite{Sch}) and complement it by the Condorcet
efficiency of plurality elections with cutoff. While plurality voting has Condorcet efficiency $74.3\%$, a runoff ballot of two candidates increases it to $91.2\%$. The gain is substantial and justifies runoff ballots that are part of many voting procedures. (In this introduction we content ourselves with approximate probabilities; precise rational numbers will be given later on.)

Another problem discussed
in \cite{Sch} is ``plurality versus runoff'', namely the probability
of $75.5\%$ that the plurality winner also wins the runoff.

Next follows a discussion of the four types of antisymmetric relations
between four candidates that can arise from comparisons in majority, and
their probabilities. As we will see, the case (i) of a linear order is
the most likely one by far. The other 3 cases, namely that (ii) there
exists a Condorcet winner, but not a loser, (iii) a loser, but not a
winner, and (iv) neither a winner, nor a loser, have small
probabilities $< 10\%$.

We conclude the list of voting outcomes with the Borda paradoxes. The
strict Borda paradox occurs if the outcome of the pairwise majority
comparison is a linear order and the plurality outcome completely
reverses it. For four candidates its conditional probability is
approximately $0.156\%$. The strong Borda paradox has two variants, namely
(i) that the Condorcet loser wins the plurality, and (ii) that the
Condorcet winner is the last in plurality. As to be expected, their
conditional probabilities are considerably larger, namely about $2.268\%$ and $2.379\%$.
Though one would intuitively expect that these probabilities agree, they
are not equal.

All these events are discussed together with their defining inequalities in
Section \ref{Volumes Computations}. We would like to point out that
Normaliz not only computes the volumes of the polytopes in all cases, but
also their Ehrhart series, and therefore precise numbers of election
results that represent the events under discussion, depending on the number of voters. We  give the
complete numerics only for the Condordet paradox, but all data can be obtained
by request from the authors.

Ehrhart series (see Section \ref{Ehrhart Computations}) are more easily
to compute for closed polytopes than for the  ones that arise if
one excludes ties. However, in many cases the Ehrhart series of the
semiopen polytope can be computed from that of its closure (and
conversely). The crucial condition is that all inequalities, except the
sign conditions,  defining the semiopen polytope are strict, and they are satisfied
with equality by
the election result in which every preference order has the same number
of voters. This follows from a variant of Ehrhart
reciprocity (for example, see \cite[Th. 6.51]{BG}).

Normaliz can do all computations in dimension $24$ (the number of
preference orders for four candidates), but the the computation times
range from seconds (for the Condorcet paradox) to a few days (for the
Condorcet efficiency of the runoff scheme) on a fast machine that allows
$32$ parallel threads. Schürmann \cite{Sch} made the elegant observation
that many computations can be enormously accelerated if one uses the
symmetry that is inherent in many polytopes. Only two of the polytopes
discussed in this paper do not allow this approach, namely linear order
and (consequently) the strict Borda paradox. Symmetrization, which
requires the computation of weighted Ehrhart series, shrinks computation
times from days to hours, minutes, or even tenths of seconds, depending
on the example. Symmetrization is discussed in Section \ref{Exploit}, and
all relevant computation data are listed in Section \ref{CompRep}.

The reader who is interested in a deeper understanding of the
mathematics and algorithms of Normaliz and NmzIntegrate is refereed to
the articles by Bruns with Koch \cite{BK}, Ichim \cite{BI}, Hemmecke, Ichim, Köppe and Söger\cite{BHIKS}, Ichim and Söger \cite{BIS}, Söger \cite{BS} and Sieg and Söger \cite{BSS}.

\section{Polytopes in four candidates elections and their volumes}\label{Volumes Computations}

\subsection{Voting schemes and volumes of rational polytopes}
We briefly sketch the connection between rational polytopes and
social choice, referring the reader to \cite{GL},
\cite{LLS}, \cite{Sch} or \cite{WP} for details and a more extensive
treatment. As an introductory example we first consider the well-known Condorcet paradox.
The polytopes $\cP$ associated to it and to other voting events are \emph{semiopen}: $\cP$ is the bounded set of solutions of a system of linear equations and inequalities in which some of the inequalities may be strict.

Consider an election in which each of the $k$ voters fixes a
linear preference order of $n$ candidates. In other words,
voter $i$ chooses a linear order $j_1\succ_i\dots\succ_i j_n$
of the candidates $1,\dots,n$. There are $N=n!$ such linear orders
that we list lexicographically. The
election result is the  $N$-tuple $(v_1,\dots,v_N)$ in which
$v_p$ is the number of voters that have chosen the preference
order $p$. Then $v_1+\dots+v_N=k$, and $(v_1,\dots,v_N)$ can be
considered as a lattice point in the positive orthant
$\RR_+^N$ of $\RR^B$, more precisely, as a lattice point in the \emph{simplex}
$$
\cU_k^{(n)}=\RR_+^N\cap A_k=k\bigl(\RR_+^N\cap A_1\bigr)=k\cU^{(n)}
$$
where $A_k$ is the hyperplane defined by $v_1+\dots+v_N=k$, and
$\cU^{(n)}=\cU_1^{(n)}$ is the unit simplex of dimension $N-1$
naturally embedded in $N$-space. ($\cU^{(n)}$ is the convex hull of the unit vectors; see Figure \ref{fig_init}.)
\begin{figure}[hbt]
\begin{center}
\tikzset{facet style/.style={opacity=1.0,very thick,line,join=round}}
\begin{tikzpicture}[x  = {(-0.5cm,-0.5cm)},
y  = {(0.9659cm,-0.25882cm)},
z  = {(0cm,1cm)},
scale = 2]
\draw [->] (-0.5, 0, 0) -- (1.7,0,0) node at (1.9,0,0) {$x_1$} node at (1, -0.1,0){$1$};
\draw [->] (0, -0.5, 0) -- (0,1.5,0) node at (0,1.7,0) {$x_2$} node at (-0.1, 1,0.1){$1$};
\draw [->] (0, 0, -0.5) -- (0,0,1.5) node at (0,0,1.6){$x_3$}node at (-0.1, -0.15,1){$1$};
\draw[thick] (1,0,0) -- (0,1,0) -- (0,0,1) -- cycle;
\end{tikzpicture}
\caption{The unit simplex in $\RR^3$}\label{fig_init}
\end{center}
\end{figure}
The further  discussion is
based on the \emph{Impartial Anonymous Culture} assumption that
all lattice points in the simplex $\cU_k^{(n)}$ have equal
probability of being the outcome of the election.

We fix a specific outcome $v=(v_1,\dots,v_N)$. By the \emph{majority rule}
candidate $j$ beats candidate $j'$ in the election with result $v$
if
\begin{equation}
\#\{i:j \succ_i j':i=1,\dots,k\} > \#\{i:j' \succ_i j:i=1,\dots,k\}.\label{cond1}
\end{equation}
As the Marquis de Condorcet \cite{C} observed, the relation ``beats'' is
nontransitive in general, and one must ask for the probability
of Condorcet's paradoxon, namely an outcome  without a
Condorcet winner, where candidate $j$ is a \emph{Condorcet
winner}  if $j$ beats all other candidates $j'$. The Condorcet winner will sometimes be denoted $\CW$ in the following. Given the number $k$ of voters, let
$p_{\CW}^{(n,j)}(k)$ denote the probability that candidate $j$ is
the Condorcet winner, and $p_{\CW}^{(n)}(k)$ the probability that
there is a Condorcet winner at all. By symmetry and by mutual
exclusion, $p_{\CW}^{(n)}(k)=n\cdot p_{\CW}^{(n,j)}(k)$.

Now, if we assume that the number $k$ of voters is very large, then we are mainly interested in
the limit
$$
p_{\CW}^{(n)}=\lim_{k\to\infty} p_{\CW}^{(n)}(k)= n\lim_{k\to\infty} p_{\CW}^{(n,j)}(k)=n\cdot  p_{\CW}^{(n,j)}.
$$

Let us fix candidate $1$. It is not hard to see that the $n-1$
inequalities \eqref{cond1} for $j=1$ and $j'=2,\dots,n$
constitute homogeneous linear inequalities in the variables
$v_1,\dots,v_N$. Together with the sign conditions $v_i\ge 0$ they
define a semi-open subpolytope $\cC^{(n)}_k$ of $\cU_k^{(n)}$.
Then
\begin{equation}
p_{\CW}^{(n,1)}=\lim_{k\to\infty}\frac{\#(\cC_k^{(n)}\cap\ZZ^N)}{\#(\cU_k^{(n)}\cap \ZZ^N)}=
\frac{\vol\cC_1^{(n)}}{\vol\cU_1^{(n)}}=\vol\overline {\cC}^{(n)}\label{Cond2}
\end{equation}
where $\overline{\phantom{C}}$ denotes closure and
$\cC^{(n)}=\cC_1^{(n)}$. For the validity of \eqref{Cond2} note
that we work with the lattice normalized volume in which the
unit simplex has volume $1$.

In the case of two candidates, Concordet's paradox cannot occur
(if one excludes draws), and for three candidates the relevant
volume is not hard to compute, even without a computer (see Gehrlein and Fishburn \cite{GF}).

The situation changes significantly for four candidates since $\cC^{(4)}$ has
dimension $23$ and $234$ vertices. \emph{From now on we will simplify our notation and omit often the superscript $(4)$ when dealing with four candidates.}
As a subpolytope of $\cU$, $\cC$ is cut out by the inequalities
$\lambda_i(v)>0$, $i=1,2,3$ whose coefficients are in the first
$3$ rows displayed in Table \ref{ineq1}. For the assignment of
indices the preference orders are listed lexicographically,
starting with $1\succ 2 \succ 3\succ 4$ and ending with $4\succ
3 \succ 2\succ 1$.
\begin{table}[hbt]
{\small \tabcolsep=1.3pt
\begin{tabular}{rrrrrrrrrrrrrrrrrrrrrrrrr}
$\lambda_1$:&\ \ \ 1&\ \ \ 1&\ \ \ 1&\ \ \ 1&\ \ \ 1&\ \ \ 1&$-1$&$-1$&$-1$&$-1$&$-1$&$-1$&1&1&$-1$&$-1$&1&$-1$&1&1&$-1$&$-1$&1&$-1$\\
$\lambda_2$:&1&1&1&1&1&1&1&1&$-1$&$-1$&1&$-1$&$-1$&$-1$&$-1$&$-1$&$-1$&$-1$&1&1&1&$-1$&$-1$&$-1$\\
$\lambda_3$:&\rule[-1.5ex]{0ex}{2ex}1&1&1&1&1&1&1&1&1&$-1$&$-1$&$-1$&1&1&1&$-1$&$-1$&$-1$&$-1$&$-1$&$-1$&$-1$&$-1$&$-1$\\ \hline
$\lambda_4$:&\rule{0ex}{3ex}$1$&$1$&$1$&$1$&$1$&$1$&-1&-1&-1&-1&-1&-1&0&0&0&0&0&0&0&0&0&0&0&0\\
$\lambda_5$:&1&1&1&1&1&1&0&0&0&0&0&0&$-1$&$-1$&$-1$&$-1$&$-1$&$-1$&0&0&0&0&0&0\\
$\lambda_6$:&1&1&1&1&1&1&0&0&0&0&0&0&0&0&0&0&0&0&$-1$&$-1$&$-1$&$-1$&$-1$&$-1$
\end{tabular}}
\vspace*{2ex} \caption{Inequalities for $\cC$ and
$\cE$}\label{ineq1}
\end{table}

Normaliz computes
$$
\vol\cC=\frac{1717}{8192}
$$
in a few seconds. (The combinatorial data of all polytopes discussed and the computation times are listed in Section \ref{CompRep}.) It follows that $p_\CW=1717/2048\approx
0.8384$. This value was first determined by Gehrlein in \cite{G}.

\subsection{Plurality rule and runoff}
The simplest way out of the dilemma that there may not exist a
Condorcet winner is \emph{plurality voting}: candidate $j$ is
the \emph{plurality winner} if $j$ has more first places in the
preference orders of the voters than any of the other $n-1$
candidates. A problem discussed in \cite{Sch} is \emph{plurality
voting versus plurality runoff}. It goes as follows. In the
first round of the election the two top candidates in plurality
are selected, and in the second round the preference
orders are restricted to these two candidates. In order to
model this situation by inequalities one must fix an outcome of
the first round of plurality voting,
for example we may assume that candidate $1$ is the winner of the first round and candidate $2$ is placed second after the first round.
The chosen outcome gives rise to $n-1$
inequalities. Then the $n$-th inequality expresses that $1$ is also
the winner of the second round. The volume of the corresponding
polytope gives the probability of this event. By mutual
exclusion and symmetry, we must multiply the volume by $n(n-1)$ in
order to obtain the probability for the event that the winner
of the first plurality round wins after runoff.

As a subpolytope of $\cU$, the polytope $\cQ$ is
defined by the inequalities in Table \ref{ineq2}.
\begin{table}[hbt]
{\small \tabcolsep=1.3pt
\begin{tabular}{rrrrrrrrrrrrrrrrrrrrrrrr}
1&1&1&1&1&1&$-1$&$-1$&$-1$&$-1$&$-1$&$-1$&0&0&0&0&0&0&0&0&0&0&0&0\\
0&0&0&0&0&0&1&1&1&1&1&1&$-1$&$-1$&$-1$&$-1$&$-1$&$-1$&0&0&0&0&0&0\\
0&0&0&0&0&0&1&1&1&1&1&1&0&0&0&0&0&0&$-1$&$-1$&$-1$&$-1$&$-1$&$-1$\\
\ \ \ 1&\ \ \ 1&\ \ \ 1&\ \ \ 1&\ \ \ 1&\ \ \ 1&$-1$&$-1$&$-1$&$-1$&$-1$&$-1$&1&1&$-1$&$-1$&1&$-1$&1&1&$-1$&$-1$&1&$-1$
\end{tabular}}
\vspace*{2ex} \caption{Inequalities for
$\cQ$}\label{ineq2}
\end{table}
The volume is
$$
\vol \cQ=\frac{9185069468583833}{146081389744226304}.
$$
The total probability of the event that the winner
of the first plurality round wins after runoff is
$$
12\cdot \vol \cQ=\frac{9185069468583833}{12173449145352192} \approx
0.7545.$$
Therefore, the probability of the failure of the winner of the first
round to win the runoff is
$$
1-\frac{9185069468583833}{12173449145352192} \approx
0.2455,
$$
in accordance with the results of \cite{Sch} for a similar
model. This computation was first performed by De Loera, Dutra,
K\"{o}ppe, Moreinis, Pinto and Wu in \cite{Latte}, where LattE
Integrale \cite{LatInt} was used for the volume computation.

\subsection{Condorcet efficiency} The last problem discussed in \cite{Sch} is the  \emph{Condorcet efficiency} of plurality voting. It is the conditional
probability that the Condorcet winner, provided that such
exists, is elected by plurality voting, as $k\to\infty$. (Similarly one defines the Condorcet efficiency of other voting schemes.) Therefore one must compute the
probability of the event that candidate $j$ is both the Condorcet
winner and the plurality winner. By
symmetry, one can assume $j=1$. The semi-open
polytope $\cE_k^{(n)}$, whose lattice points represent this
expected outcome, is cut out from $\cC_k^{(n)}$ by $n-1$
further inequalities saying that $1$ has more first places than any of
the other $n-1$ candidates. Thus one obtains
$$
\frac{n\vol \cE^{(n)}}{p_{\CW}^{(n)}}
$$
as the Condorcet efficiency of plurality voting where
$\cE^{(n)}=\cE_1^{(n)}$.

The extra $3$ inequalities $\lambda_i(v)>0$, $i=4,5,6$, given
in the last $3$ lines of Table \ref{ineq1} increase the
complexity of the polytope $\cE$ enormously in comparison to $\cC$. Nevertheless, Normaliz
computes the volume in moderate time.
We have obtained
$$
\vol \cE=\frac{10658098255011916449318509}{68475651442606080000000000},
$$
so that the Condorcet efficiency of plurality voting turns out
to be
$$
\frac{4\vol \cE}{p_\CW}=
\frac{10658098255011916449318509}{14352135440302080000000000}
\approx 0.7426,
$$
in perfect accordance with \cite{Sch}.

\begin{remark} Schürmann \cite{Sch} used variants of the polytopes $\cQ$ and $\cE$. Our choices (which are demanding slightly more computational resources) avoid inclusion--exclusion calculations that would come up in Section \ref{Ehrhart Computations}.
\end{remark}

It is interesting to compare the Condorcet efficiency of plurality voting to the  \emph{the Condorcet efficiency of the runoff
voting scheme}. In other words, given that there exists a Condorcet winner, what is the
probability that she or he is at least second in plurality?

As above, let us assume that candidate $1$ is the Condorcet winner.  Then there are $n$ possible cases.
The first case is when candidate $1$ is the plurality winner as well, and this case was studied above.
The other $n-1$ cases are associated to the event  that  candidate $j$ wins or ties candidate $1$ in plurality (where $j\neq 1$),
while candidate $1$ wins the plurality voting against all other candidates.  By symmetry, these $n-1$ cases are identical and yield the same volume.
The $n$ cases are mutually disjoint and exhaust all the possibilities. (Disjointness is important in Section \ref{Ehrhart Computations} for avoiding complicated inclusion--exclusion calculations.) The semi-open
polytope $\cF_{1,j}^{(n)}$, whose lattice points represent the outcome of the cases $j=2,\ldots,n$, is cut out from $\cC_k^{(n)}$ by the  closed condition that candidate $j$ wins against or ties with candidate $1$ in plurality and by  $n-2$
inequalities saying that $1$ has more first places than the remaining $n-2$ candidates. Thus one obtains
$$
\frac{n\vol \cE^{(n)}+n(n-1)\vol \cF^{(n)}}{p_{\CW}^{(n)}}
$$
as the the Condorcet efficiency of the runoff, where
$\cF^{(n)}=\cF_{1,2}^{(n)}$.

As a subpolytope of $\cU$, the polytope $\cF$ is
defined by the inequalities in Table \ref{ineqadded}, where the last line should be interpreted as $\overline{-\lambda_4}(v)\ge 0$, expressing the condition that candidate 1 does not beat candidate 2 in plurality.
\begin{table}[hbt]
{\small \tabcolsep=1.3pt
\begin{tabular}{rrrrrrrrrrrrrrrrrrrrrrrrr}
$\lambda_1$:&\ \ \ 1&\ \ \ 1&\ \ \ 1&\ \ \ 1&\ \ \ 1&\ \ \ 1&$-1$&$-1$&$-1$&$-1$&$-1$&$-1$&1&1&$-1$&$-1$&1&$-1$&1&1&$-1$&$-1$&1&$-1$\\
$\lambda_2$:&1&1&1&1&1&1&1&1&$-1$&$-1$&1&$-1$&$-1$&$-1$&$-1$&$-1$&$-1$&$-1$&1&1&1&$-1$&$-1$&$-1$\\
$\lambda_3$:&\rule[-1.5ex]{0ex}{2ex}1&1&1&1&1&1&1&1&1&$-1$&$-1$&$-1$&1&1&1&$-1$&$-1$&$-1$&$-1$&$-1$&$-1$&$-1$&$-1$&$-1$\\ \hline
$\lambda_5$:&1&1&1&1&1&1&0&0&0&0&0&0&$-1$&$-1$&$-1$&$-1$&$-1$&$-1$&0&0&0&0&0&0\\
$\lambda_6$:&1&1&1&1&1&1&0&0&0&0&0&0&0&0&0&0&0&0&$-1$&$-1$&$-1$&$-1$&$-1$&$-1$\\ \hline
$\overline{-\lambda_4}$:&\rule{0ex}{3ex}$-1$&$-1$&$-1$&$-1$&$-1$&$-1$&1&1&1&1&1&1&0&0&0&0&0&0&0&0&0&0&0&0\\
\end{tabular}}
\vspace*{2ex} \caption{Inequalities for
$\cF$}\label{ineqadded}
\end{table}

The volume is
$$
\vol \cF=\frac{7280153240719060220104571}{616280862983454720000000000}.
$$
Finally, the Condorcet efficiency of the runoff is
$$
\frac{4\vol \cE+12\vol \cF }{p_\CW}=
\frac{19627224002877404784030049}{21528203160453120000000000}
\approx 0.9117.
$$

\subsection{Condorcet classification}\label{subsection:Cond_class} In this subsection we classify the asymmetric relations between four candidates given by the majority rule. To the best of our knowledge the computations of the probabilities of the classes is new.

Let us first outline a duality argument that will be used several times below. Consider an election result $v=(v_1,\dots,v_N)$ for $n$ candidates, where $N=n!$, the $N$ preference orders $\pi_1,\dots,\pi_N$ are listed in some order, and $v_i$ is the number of voters of $\pi_i$. Each preference order has an inverse $c(\pi)$ that ranks the candidates in inverse order relative to $\pi$: the inverse order to $1 \succ 2 \succ 3\succ 4$ is $4\succ 3 \succ 2\succ 1$ etc. The assignment $\pi\to c(\pi)$ defines a permutation of the sequence $1,\dots,N$, sending $i$ to the index of $c(\pi_i)$. The induced permutation of the coordinates of $\RR^N$ is called \emph{inversion of preference orders}. It inverts all comparisons in majority. In particular it turns a Condorcet winner into a Condorcet loser, and conversely.

The results of the $n$ candidates elections may  be classified in two main categories:
\begin{enumerate}
\item[(A)] There exists a Condorcet winner. As seen above, in the case of four candidates elections the results fall into this category  with probability
 $$P(\exists \text{ Condorcet Winner})=1717/2048\approx 0.8384.$$
\item[(B)] There exists no Condorcet winner. For four candidates elections the results fall into this category  with probability
$$P(\nexists \text{ Condorcet Winner})=1-1717/2048=331/2048\approx 0.1616.$$
\end{enumerate}

We refer the reader to Gehrlein and Lepelley \cite[Section 3.2.1]{GL} or \cite{GL2} for a discussion of the three candidates situation. Note that in the three candidates scenario the event that there exist a linear order on the result of the majority voting is the same as the event that there exists a Condorcet winner, since the other two candidates are automatically ordered. This no longer true for four or more candidates. Even if a Condorcet winner exists, the  remaining candidates need not to be linearly ordered. Therefore, we need to further refine our classification.

We discuss the case of four candidates in detail:
\begin{enumerate}
	
\item[(A)] Assume that a Condorcet winner (CW) does exist.  This situation must be split into two subcategories:
\begin{enumerate}
\item[(1)] The result of the majority voting defines a linear order of the candidates. This further implies (independently of the number of candidates $n$) that there also exists a Condorcet loser (CL).

\item[(2)] There exist no linear order on the result of the majority voting. In the (particular) case of four candidates elections this is equivalent to saying that there exists a cycle of length three among
the lower candidates (i.e., the three candidates Condorcet Paradox) or that there exists no Condorcet loser.
\end{enumerate}

\item[(B)] Now assume the case that a Condorcet winner does not exist.  This situation must also  be split into two subcategories:
\begin{enumerate}
\item[(1)] There exists a cycle of order three among the candidates and a Condorcet loser. Inversion of preference  orders turns this case into (A2). In particular they have the same probability.

\item[(2)] There exist a cycle of length four among the candidates or (equivalently) there exists no Condorcet loser. This condition defines only $4$ of the $6$ relations between the candidates, but it easy to check that all 4 possibilities for the remaining $2$ relations are equivalent up to a renaming of the candidates.
\end{enumerate}
\end{enumerate}

\begin{figure}[hbt]
\begin{center}
\begin{tikzpicture}
\tikzstyle{every node} = [circle, fill=gray!30]
\node (a) at (0,2) {A};
\node (c) at (0,0) {C};
\node (b) at (2,2) {B};
\node (d) at (2,0) {D};
\foreach \from/\to in {a/b, a/c, a/d, b/c, b/d, c/d}
\draw [->, >=stealth, thick] (\from) -- (\to);	
\end{tikzpicture}
\hspace{3mm}	
\begin{tikzpicture}
\tikzstyle{every node} = [circle, fill=gray!30]
\node (a) at (0,2) {A};
\node (c) at (0,0) {C};
\node (b) at (2,2) {B};
\node (d) at (2,0) {D};
\foreach \from/\to in {a/b, a/c, a/d, b/c, d/b, c/d}
\draw [->, >=stealth, thick] (\from) -- (\to);	
\end{tikzpicture}
\hspace{3mm}
\begin{tikzpicture}
\tikzstyle{every node} = [circle, fill=gray!30]
\node (a) at (0,2) {A};
\node (c) at (0,0) {C};
\node (b) at (2,2) {B};
\node (d) at (2,0) {D};
\foreach \from/\to in {a/b, c/a, a/d, b/c, b/d, c/d}
\draw [->, >=stealth, thick] (\from) -- (\to);	
\end{tikzpicture}
\hspace{3mm}
\begin{tikzpicture}
\tikzstyle{every node} = [circle, fill=gray!30]
\node (a) at (0,2) {A};
\node (c) at (0,0) {C};
\node (b) at (2,2) {B};
\node (d) at (2,0) {D};
\foreach \from/\to in {a/b, a/d, b/c, b/d, d/c, c/a}
\draw [->, >=stealth, thick] (\from) -- (\to);	
\end{tikzpicture}	
\end{center}
\caption{Oriented graphs representing the Condorcet classes of four candidates with respect to the relation given by the majority rule}\label{graphs}
\end{figure}

In order to compute the probabilities of the $4$ classes, we consider the polytope $\cT$ which corresponds to the event that candidate $1$ beats candidates $2,3,4$, candidate $2$ beats candidates $3,4$ and candidate $3$ beats candidates $4$. In other words, $\cT$ represents the linear order.
As a subpolytope of $\cU$, the polytope $\cT$ is
defined by the inequalities in Table \ref{linear_order}. (Note that $\beta_i=\lambda_i$ for $i=1,2,3$.)
\begin{table}[hbt]
{\small \tabcolsep=1.3pt
\begin{tabular}{rrrrrrrrrrrrrrrrrrrrrrrrr}
$\beta_1$:&$ 1$&$ 1$&$ 1$&$ 1$&$ 1$&$ 1$     &$-1$&$-1$&$-1$&$-1$&$-1$&$-1$      &$ 1$&$ 1$&$-1$&$-1$&$ 1$&$-1$        &$ 1$&$ 1$&$-1$&$-1$&$ 1$&$-1$\\
$\beta_2$:&$ 1$&$ 1$&$ 1$&$ 1$&$ 1$&$ 1$     &$ 1$&$ 1$&$-1$&$-1$&$ 1$&$-1$      &$-1$&$-1$&$-1$&$-1$&$-1$&$-1$        &$ 1$&$ 1$&$ 1$&$-1$&$-1$&$-1$\\
$\beta_3$:&$ 1$&$ 1$&$ 1$&$ 1$&$ 1$&$ 1$     &$ 1$&$ 1$&$ 1$&$-1$&$-1$&$-1$      &$ 1$&$ 1$&$ 1$&$-1$&$-1$&$-1$        &$-1$&$-1$&$-1$&$-1$&$-1$&$-1$\\
$\beta_4$:&$ 1$&$ 1$&$-1$&$-1$&$ 1$&$-1$     &$ 1$&$ 1$&$ 1$&$ 1$&$ 1$&$ 1$      &$-1$&$-1$&$-1$&$-1$&$-1$&$-1$        &$ 1$&$-1$&$ 1$&$ 1$&$-1$&$-1$\\
$\beta_5$:&$ 1$&$ 1$&$ 1$&$-1$&$-1$&$-1$     &$ 1$&$ 1$&$ 1$&$ 1$&$ 1$&$ 1$      &$ 1$&$-1$&$ 1$&$ 1$&$-1$&$-1$        &$-1$&$-1$&$-1$&$-1$&$-1$&$-1$\\
$\beta_6$:&$ 1$&$-1$&$ 1$&$ 1$&$-1$&$-1$     &$ 1$&$-1$&$ 1$&$ 1$&$-1$&$-1$      &$1$&$1$&$1$&$1$&$1$&$1$        &$-1$&$-1$&$-1$&$-1$&$-1$&$-1$
\end{tabular}}
\vspace*{2ex} \caption{Inequalities for
$\cT$}\label{linear_order}
\end{table}

We have obtained
$$
\vol \cT=\frac{5507086513}{173946175488},
$$
Since a set of $4$ elements admits $24$ possible linear orders, the probability to have a linear order on the result of the majority voting is
$$
P(\exists \text{ CW},\exists \text{ CL})=\frac{5507086513}{7247757312}\approx 0.7598.
$$
It follows that the probability that a Condorcet winner does exists, and still there exist no linear order on the result of the majority voting is
$$
P(\exists \text{ CW},\nexists \text{ CL})=\frac{ 1717}{2048}-\frac{5507086513}{7247757312}=\frac{569280335}{7247757312}\approx 0.07855.
$$
By the duality argument observed in the case (B1)  the probability that a Condorcet loser does exists, but no Condorcet winner, is
$$
P(\nexists \text{ CW},\exists \text{ CL})=\frac{569280335}{7247757312}\approx 0.07855
$$
as well. The probability of the remaining class (B2), the existence of a $4$-cycle, is
$$
P(\nexists \text{ CW},\nexists \text{ CL})=1-\frac{5507086513}{7247757312}-2*\frac{569280335}{7247757312}=\frac{602110129}{7247757312}\approx 0.0831.
$$

As a test for the correctness of the algorithm, we have nevertheless computed the probability of a $4$-cycle directly. To this end, we consider the polytope $\cK$ corresponding to the event that candidate $1$ beats candidates $2$, candidate $2$ beats candidates $3$, candidate $3$ beats candidates $4$ and candidate $4$ beats candidates $1$.
As a subpolytope of $\cU$, the polytope $\cK$ is
defined by the inequalities in Table \ref{cycle}.
\begin{table}[hbt]
{\small \tabcolsep=1.3pt
\begin{tabular}{rrrrrrrrrrrrrrrrrrrrrrrrr}
$\beta_1$:&$ 1$&$ 1$&$ 1$&$ 1$&$ 1$&$ 1$     &$-1$&$-1$&$-1$&$-1$&$-1$&$-1$      &$ 1$&$ 1$&$-1$&$-1$&$ 1$&$-1$        &$ 1$&$ 1$&$-1$&$-1$&$ 1$&$-1$\\
$\beta_4$:&$ 1$&$ 1$&$-1$&$-1$&$ 1$&$-1$     &$ 1$&$ 1$&$ 1$&$ 1$&$ 1$&$ 1$      &$-1$&$-1$&$-1$&$-1$&$-1$&$-1$        &$ 1$&$-1$&$ 1$&$ 1$&$-1$&$-1$\\
$\beta_6$:&$ 1$&$-1$&$ 1$&$ 1$&$-1$&$-1$     &$ 1$&$-1$&$ 1$&$ 1$&$-1$&$-1$      &$1$&$1$&$1$&$1$&$1$&$1$        &$-1$&$-1$&$-1$&$-1$&$-1$&$-1$\\
$\kappa$:&$-1$&$-1$&$-1$&$-1$&$-1$&$-1$     &$-1$&$-1$&$-1$&$ 1$&$ 1$&$ 1$      &$-1$&$-1$&$-1$&$ 1$&$ 1$&$ 1$        &$ 1$&$ 1$&$ 1$&$ 1$&$ 1$&$ 1$
\end{tabular}}
\vspace*{2ex} \caption{Inequalities for
$\cK$}\label{cycle}
\end{table}
We have obtained
$$
\vol \cK=\frac{602110129}{43486543872},
$$
Since a set of $4$ elements admits $6$ possible cycles of length four among the elements (if we fix one element there are 6 possible linear orders among the remaining three elements), one obtains exactly the same probability for the class (B2) that was computed indirectly above.

\subsection{Borda paradoxes} In this subsection we study a family of voting paradoxes, first introduced by the Chevalier de Borda in \cite{B}.
To the best of our knowledge all volume computations in this subsection are new.

The \emph{strict Borda paradox} appears in a voting situation when there is a complete reversal of the ranking of candidates  given by the \emph{majority voting} and \emph{plurality voting}.
In order to model this situation by inequalities one must fist assume that there exist a \emph{linear order} on the result of the majority voting that involves all $n$ candidates, say
$1,\dots,n$ in this order (there are $n!$ possible outcomes). The chosen outcome gives rise to
$$\frac{n(n-1)}{2}$$
inequalities. Then one must add $n-1$ inequalities expressing that the order was completely reversed by the plurality voting.
The volume of the corresponding polytope gives the probability of this event. By mutual
exclusion and symmetry, we must multiply the volume by $n!$ and then take the conditional probability under the hypothesis that such a linear order exists.

As a subpolytope of $\cU$, the polytope $\cB_{\St}$ is
defined by the inequalities in Table \ref{ineq3}.
\begin{table}[hbt]
{\small \tabcolsep=1.3pt
\begin{tabular}{rrrrrrrrrrrrrrrrrrrrrrrrr}
$\beta_1$:&$ 1$&$ 1$&$ 1$&$ 1$&$ 1$&$ 1$     &$-1$&$-1$&$-1$&$-1$&$-1$&$-1$      &$ 1$&$ 1$&$-1$&$-1$&$ 1$&$-1$        &$ 1$&$ 1$&$-1$&$-1$&$ 1$&$-1$\\
$\beta_2$:&$ 1$&$ 1$&$ 1$&$ 1$&$ 1$&$ 1$     &$ 1$&$ 1$&$-1$&$-1$&$ 1$&$-1$      &$-1$&$-1$&$-1$&$-1$&$-1$&$-1$        &$ 1$&$ 1$&$ 1$&$-1$&$-1$&$-1$\\
$\beta_3$:&$ 1$&$ 1$&$ 1$&$ 1$&$ 1$&$ 1$     &$ 1$&$ 1$&$ 1$&$-1$&$-1$&$-1$      &$ 1$&$ 1$&$ 1$&$-1$&$-1$&$-1$        &$-1$&$-1$&$-1$&$-1$&$-1$&$-1$\\
$\beta_4$:&$ 1$&$ 1$&$-1$&$-1$&$ 1$&$-1$     &$ 1$&$ 1$&$ 1$&$ 1$&$ 1$&$ 1$      &$-1$&$-1$&$-1$&$-1$&$-1$&$-1$        &$ 1$&$-1$&$ 1$&$ 1$&$-1$&$-1$\\
$\beta_5$:&$ 1$&$ 1$&$ 1$&$-1$&$-1$&$-1$     &$ 1$&$ 1$&$ 1$&$ 1$&$ 1$&$ 1$      &$ 1$&$-1$&$ 1$&$ 1$&$-1$&$-1$        &$-1$&$-1$&$-1$&$-1$&$-1$&$-1$\\
$\beta_6$:&\rule[-1.5ex]{0ex}{2ex}$ 1$&$-1$&$ 1$&$ 1$&$-1$&$-1$     &$ 1$&$-1$&$ 1$&$ 1$&$-1$&$-1$      &$1$&$1$&$1$&$1$&$1$&$1$        &$-1$&$-1$&$-1$&$-1$&$-1$&$-1$\\ \hline
$\beta_7$:&\rule{0ex}{3ex}$-1$&$-1$&$-1$&$-1$&$-1$&$-1$&$1$&$1$&$1$&$1$&$1$&$1$&$0$&$0$&$0$&$0$&$0$&$0$&$0$&$0$&$0$&$0$&$0$&$0$\\
$\beta_8$:&$0$&$0$&$0$&$0$&$0$&$0$&$-1$&$-1$&$-1$&$-1$&$-1$&$-1$&$1$&$1$&$1$&$1$&$1$&$1$&$0$&$0$&$0$&$0$&$0$&$0$\\
$\beta_9$:&$0$&$0$&$0$&$0$&$0$&$0$&$0$&$0$&$0$&$0$&$0$&$0$&$-1$&$-1$&$-1$&$-1$&$-1$&$-1$&$1$&$1$&$1$&$1$&$1$&$1$
\end{tabular}}
\vspace*{2ex} \caption{Inequalities for
$\cB_{\St}$}\label{ineq3}
\end{table}
Note that the first six inequalities describe the event that the result of the majority voting yields a linear order.
We have obtained
$$
\vol \cB_{\St}=\frac{1281727528386311499990911876166511}{25940255058441281524973174784000000000}.
$$

Finally, conditioned by the assumption that there exists a linear order on the result of the majority voting, we get the probability of the strict Borda paradox for large numbers of voters,
$$
B_{\St}= \frac{24\cdot\vol \cB_{\St}}{24\cdot\vol \cT}=
\frac{1281727528386311499990911876166511}{821261107784328041072841984000000000}\approx 0.00156.
$$

\begin{remark}\label{strict_Borda}
(a) The inequalities defining the strict Borda paradox have an obvious property, which we only state since it does not hold in the case of the strong Borda paradox discussed below: it does not matter if one considers the inequalities in Table \ref{ineq3} or the inequalities defined by the same linear forms multiplied by $-1$. In fact, the multiplication by $-1$ reverses the linear order on the candidates both for majority and plurality, and thus amounts to a renaming of the candidates.

(b) One may ask, as in \cite{GL2}, what happens if the \emph{negative plurality rule} is used instead of the plurality rule. The negative plurality rule requires the voters to cast a vote against their least preferred candidate. It is not difficult to see that inversion of the preference orders as discussed in subsection \ref{subsection:Cond_class} above, maps an event representing the strict Borda paradox for plurality to an event representing the strict Borda paradox for negative plurality. Therefore no new computation is necessary.

(c) With the notation used in \cite[Formula 19]{GL2}, we have
$$
B_{\St}=P_{\textup{StBR}}^{\textup{PR}} (4,k,\textup{IAC})=P_{\textup{StBR}}^{\textup{NPR}} (4,k,\textup{IAC}) \quad \text{ for } k\rightarrow \infty.
$$
We note that the probability of observing the strict Borda paradox in four candidates elections (under the Impartial Anonymous Culture hypothesis) is significantly smaller than the probability of observing the strict Borda paradox in three candidates elections, which was computed in \cite[Formula 19]{GL2} and is $1/90\approx 0.0111$.
\end{remark}

The strong Borda paradox is the voting situation in which there is an inversion between the winner or the loser from the \emph{majority voting} to \emph{plurality voting}.
It appears that de Borda was primarily concerned with the outcome that plurality voting
might elect the majority voting loser \cite{B}. In the following we say that the \emph{strong Borda paradox} occurs if the Condorcet loser is the winner of the plurality voting.
Let us assume that candidate $1$ is the Condorcet loser. Next, by assuming that $1$ wins the plurality we obtain $n-1$ new inequalities. Each candidate may fulfill these conditions, so by symmetry the result must be multiplied by $n$.
Finally, the conditional probability has to be computed.

As a subpolytope of $\cU$, the polytope $\cB_{\Sg}$ is
defined by the inequalities in Table \ref{ineq4}.
\begin{table}[hbt]
{\small \tabcolsep=1.3pt
\begin{tabular}{rrrrrrrrrrrrrrrrrrrrrrrrr}
$-\beta_1$:&$-1$&$-1$&$-1$&$-1$&$-1$&$-1$     &$ 1$&$ 1$&$ 1$&$ 1$&$ 1$&$ 1$      &$-1$&$-1$&$ 1$&$ 1$&$-1$&$ 1$        &$-1$&$-1$&$ 1$&$ 1$&$-1$&$ 1$\\
$-\beta_2$:&$-1$&$-1$&$-1$&$-1$&$-1$&$-1$     &$-1$&$-1$&$ 1$&$ 1$&$-1$&$ 1$      &$ 1$&$ 1$&$ 1$&$ 1$&$ 1$&$ 1$        &$-1$&$-1$&$-1$&$ 1$&$ 1$&$ 1$\\
$-\beta_3$:&\rule[-1.5ex]{0ex}{2ex}$-1$&$-1$&$-1$&$-1$&$-1$&$-1$     &$-1$&$-1$&$-1$&$ 1$&$ 1$&$ 1$      &$-1$&$-1$&$-1$&$ 1$&$ 1$&$ 1$        &$ 1$&$ 1$&$ 1$&$ 1$&$ 1$&$ 1$\\ \hline
$\beta_{10}$:&\rule{0ex}{3ex}$1$&$1$&$1$&$1$&$1$&$1$&$-1$&$-1$&$-1$&$-1$&$-1$&$-1$&$0$&$0$&$0$&$0$&$0$&$0$&$0$&$0$&$0$&$0$&$0$&$0$\\
$\beta_{11}$:&$1$&$1$&$1$&$1$&$1$&$1$&$ 0$&$ 0$&$ 0$&$ 0$&$ 0$&$ 0$&$-1$&$-1$&$-1$&$-1$&$-1$&$-1$&$0$&$0$&$0$&$0$&$0$&$0$\\
$\beta_{12}$:&$1$&$1$&$1$&$1$&$1$&$1$&$ 0$&$ 0$&$ 0$&$ 0$&$ 0$&$ 0$&$ 0$&$ 0$&$ 0$&$ 0$&$ 0$&$ 0$&$-1$&$-1$&$-1$&$-1$&$-1$&$-1$
\end{tabular}}
\vspace*{2ex} \caption{Inequalities for
$\cB_{\Sg}$}\label{ineq4}
\end{table}
One gets
$$
\vol \cB_{\Sg}=\frac{325451674835828550681491}{68475651442606080000000000}.
$$
Combined with the previous computations, we obtain the probability of the strong Borda paradox for large numbers of voters:
$$
B_{\Sg}= \frac{4\cdot\vol \cB_{\Sg}}{p_\CW}=
\frac{325451674835828550681491}{14352135440302080000000000}\approx 0.02268.
$$
(We have used that the probability that there exists a Condorcet loser equals the probability that there exists a Condorcet winner.)

The situation when the Condorcet winner is the loser of the plurality voting presents also interest, in which case we say that the \emph{reverse strong Borda paradox} appears.
This situation is easy to model: we simply have to multiply the linear forms of Table \ref{ineq4} by $-1$. In other words,
as a subpolytope of $\cU$, the polytope $\cB_{\SgRev}$ is
defined by the inequalities $\ \beta_1,\ \beta_2,\ \beta_4,-\beta_{10},-\beta_{11},-\beta_{12}$.
Its volume is given by
$$
\vol \cB_{\SgRev}=\frac{104898234852130241}{21035720123168587776}.
$$

Combined with the previous computations, we get the probability of the reverse strong Borda paradox for large numbers of voters
$$
B_{\SgRev}= \frac{4\cdot\vol \cB_{\SgRev}}{p_\CW}=
\frac{104898234852130241}{4408976007260798976}\approx 0.02379.
$$

Though the difference between the two forms of the strong Borda paradox may seem surprising, there are very simple arguments for the asymmetry, independent of any computations. See Remark \ref{order}(c).

\begin{remark}\label{strong_Borda}

(a) Again one may ask what happens if the negative plurality rule (NPR) is used for the strong Borda paradox (BP) and its reverse variant. In total we then have 4 variants. But the 2 new variants are isomorphic to those considered above via the inversion of preference orders:
\begin{center}
reverse BP with NPR $\cong$ BP,\qquad BP with NPR $\cong$ reverse BP.
\end{center}
In fact, the inversion of preference orders turns an event representing the strong Borda paradox for plurality  into an event for the reverse strong Borda paradox with negative plurality. Similarly it exchanges the events within the other pair.

(b) With the notation used in \cite[Formula 20]{GL2}, we have
\begin{align*}
B_{\Sg} &=P_{\textup{SgBR}}^{\textup{PR}} (4,k,\textup{IAC})\quad \text{and} \\
B_{\SgRev} &=P_{\textup{SgBR}}^\textup{NPR} (4,k,\textup{IAC}) \quad \text{ for } k\rightarrow \infty.
\end{align*}
We note that the probability of observing the strong Borda paradox in four candidates elections under the plurality rule (respectively the negative plurality rule) is smaller, but still of the same magnitude, with the probability of observing the strong Borda paradox in three candidates elections under the plurality rule (respectively the negative plurality rule), which was computed in \cite[Formula 20]{GL2} and is $4/135 \approx 0.0296$ (respectively $17/540 \approx 0.0315$).
\end{remark}

\begin{remark} After the initial submission of this paper to arXiv.org,
Lepelley informed us by e-mail that several volume computations
can also be done by a combination of the software packages LattE
\cite{LatInt} and lrs \cite{lrs}. He has obtained precisely the same
results as we. This is a good test for the correctness of all algorithms
involved.
\end{remark}

%%%%%%%%%%%%%%%%%%%%%%%%%%%%%%%%%%%%%%%%%%%%%%%%%%%%%%%%%%%%%%%%%%%%%%%%%%%%%%%%%%%%%%%%%%%%%%%%%%%%%%%%%%%%%%%%%%%%%%%%%%%%%%%%%%%%%%%%%%%%%%%%%%%

\section{Computations of Ehrhart series and quasipolynomials arising in
four candidates elections}\label{Ehrhart Computations}

While the probability of  a certain type of election result, for example
the Condorcet paradox, can be computed as the volume of a polytope
(or the sum of such volumes), one can use the polyhedral method also to
find the exact number of election results of the given type for a specific
number $k$ of voters. For example, if $\cC$ is the semiopen polytope
defined by the condition that candidate $1$ is the Condorcet winner,
then the number of election results for $k$ voters with Condorcet winner $1$ is
$$
E(\cC,k)=\#( k\cC\cap \ZZ_+^N).
$$
The function $E(\cC,k)$ is called the \emph{Ehrhart function} of
$\cC$.  The best approach to its computations uses the generating function
$$
E_\cP(t)=\sum_{k=0}^{\infty} E(\cC,k)t^k.
$$
This \emph{Ehrhart series} is the power series expansion of a rational function at
the origin. It is computed as a rational function, and in the following we will
always represent Ehrhart series in the form
numerator/denominator. The numerator is a polynomial with integer
coefficients. The denominator can always be written as a product of $d$
terms $1-t^g$, $d=\dim \cP+1$:
$$
E_\cP(t)=\frac{h_0+h_1t+\dots+h_st^s}{(1-t^{g_1})\cdots
(1-t^{g_d})},\qquad h_i\in\ZZ.
$$
Note that in general there exists no canonical representation in this
form; see Bruns, Ichim and S\"oger \cite[Section 4]{BIS} for a brief
discussion of this problem. If $\cP$ is closed, then $h_0=1$, and the
denominator can be chosen in such a way that all $h_i$ are nonnegative.
In the semiopen case such a representation may not exist. The theory of
Ehrhart series is developed in several books, for example in \cite{BG}.
For a treatment under the aspect of social choice we refer the reader to
\cite{LLS}.

For closed polytopes $\cP$, the Ehrhart function $E(\cP,k)$ itself is
given by a quasipolynomial $q_\cP$ for all $k\ge 0$. Roughly speaking,
this means that $q_\cP(k)$ is a polynomial whose coefficients depend
periodically on $k$. The period is a divisor of the least common
multiple of the exponents $g$ in the factors $1-t^g$ in the denominator.
In the Normaliz output the period is always exactly the least
common multiple. In the semiopen case one has $E(\cP,k)=q_\cP(k)$ only
for sufficiently large $k$. More precisely, if $r$ is the degree of
$E_\cP(t)$ as a rational function, then $E(\cP,r)\neq q_\cP(r)$, but
$E(\cP,k)= q_\cP(k)$ for all $k>r$.

For $n=4$ the Ehrhart series of $\cC$ has the numerator
\begin{align*}
&\phantom{+}6 t^{1} + 15 t^{2} + 481 t^{3} + 890 t^{4} + 12346 t^{5} +
17845 t^{6}\\
&+ 152891 t^{7} + 180850 t^{8} + 1113216 t^{9} + 1111974 t^{10} +5320122
t^{11}\\
&+ 4580485 t^{12} + 17837843 t^{13} + 13415068 t^{14} + 43770180 t^{15}
+ 28993857 t^{16}\\
&+ 80758791 t^{17} + 47336170 t^{18} + 113925878 t^{19} + 59177761
t^{20} + 123966919 t^{21}\\
&+ 56990048 t^{22} + 104272000 t^{23} + 42243510 t^{24} + 67509138
t^{25} + 23917200 t^{26}\\
&+ 33268048 t^{27} + 10182887 t^{28} + 12235441 t^{29} + 3176870 t^{30}
+3255226 t^{31}\\
&+ 697232 t^{32} + 596834 t^{33} + 100915 t^{34} + 69821 t^{35} + 8655
t^{36}\\
&+ 4581 t^{37} + 363 t^{38} + 133 t^{39} + 5 t^{40} + t^{41}.\\
\end{align*}
The Ehrhart series of $\overline{\cC}$ has the numerator
\begin{align*}
&\phantom{+}1+5t^{1}+133t^{2}+363t^{3}+4581t^{4}\\
&+8655t^{5}+69821t^{6}+100915t^{7}+596834t^{8}+697232t^{9}\\
&+3255226t^{10}+3176870t^{11}+12235441t^{12}+10182887t^{13}+33268048t^{14}\\
&+23917200t^{15}+67509138t^{16}+42243510t^{17}+104272000t^{18}+56990048t^{19}\\
&+123966919t^{20}+59177761t^{21}+113925878t^{22}+47336170t^{23}+80758791t^{24}\\
&+28993857t^{25}+43770180t^{26}+13415068t^{27}+17837843t^{28}+4580485t^{29}\\
&+5320122t^{30}+1111974t^{31}+1113216t^{32}+180850t^{33}+152891t^{34}\\
&+17845t^{35}+12346t^{36}+890t^{37}+481t^{38}+15t^{39}+6t^{40}.
\end{align*}
Both have the same denominator
$$
(1-t)(1-t^2)^{14}(1-t^4)^{9}.
$$
Numerator and denominator are coprime, but in general one cannot find a
coprime representation if one insists on a denominator that is a product
of terms $1-t^g$.

If we write the numerator of $\overline{\cC}$ as $\sum_{i=0}^{40}
a_it^i $, then the numerator of $\cC$ is $\sum_{i=0}^{40}
a_{40-i}t^{i+1}$: up to a shift in degree, they are palindromes of each
other. This rather unexpected relationship is not an accident, and is
explained by the following theorem.

\def\one{\mathbf 1}
\begin{theorem}\label{reciprocity}
Let $\lambda_1,\dots,\lambda_m$ be linear forms on $\RR^d$, and let
$\one\in \RR^d$ be the vector with the entry $1$ at all coordinates.
Suppose that $\lambda_i(\one)=0$ for all $i=1,\dots,m$. Set
$C=\{x\in\RR^d_+: \lambda_i(x)\ge0,\ i=1\dots,m\}$, and
$$
D=\{x\in\RR^d_+: \lambda_i(x)>0,\ i=1\dots,m\}.
$$
Define the he semiopen polytope $\cP$ by
$$
\cP=\{x\in D: \sum x_i=1\}.
$$
If $\dim \cP=d-1$ (the maximal dimension), then the following hold:
\begin{enumerate}
\item $J=I-\one$, where $J$ is the set of lattice points in $D$ and $I$
is the set of interior lattice points of $C$.
\item
$$
E_{\cP}(t)=(-1)^d\,t^{-d}E_{\overline\cP}(t^{-1}).
$$
\item Suppose that
$$
E_{\overline\cP}(t)=\frac{h_0+h_1t\dots+h_st^s}{(1-t^{g_1})\cdots(1-t^{g_d})}.
$$
Then the Ehrhart series of $\cP$ has the numerator polynomial
$$
h_st^w+\dots+h_0t^{w+s} ,\qquad w=\sum_{i=1}^d g_i-d-s,
$$
over the same denominator.
\end{enumerate}
\end{theorem}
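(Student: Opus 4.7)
The plan is to reduce all three assertions to classical Ehrhart--Stanley reciprocity for the closed polytope $\overline\cP$, using translation by the all-ones vector $\one$ as the main device. Part (1) I would prove directly: if $x\in D\cap\ZZ^d$, then $(x+\one)_i\ge 1>0$ and $\lambda_i(x+\one)=\lambda_i(x)>0$ by the hypothesis $\lambda_i(\one)=0$, so $x+\one$ lies in the Euclidean interior of $C$; conversely, any lattice point $y\in\inte C$ has $y_i\ge 1$, and $y-\one$ lies in $D$ since $\lambda_i(y-\one)=\lambda_i(y)>0$. This establishes the bijection $J\leftrightarrow I$.

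For (2) I would promote the same translation to all dilations. Since $\sum_i 1 = d$, the map $x\mapsto x+\one$ gives a bijection between lattice points of $k\cP$ (those with coordinate sum $k$) and interior lattice points of $(k+d)\overline\cP$, so
$$
E(\cP,k)=E(\inte\overline\cP,k+d)\qquad \text{for every }k\ge 0.
$$
On the other hand, any interior lattice point of $\overline\cP$ has all coordinates $\ge 1$, hence coordinate sum $\ge d$; therefore $E(\inte\overline\cP,j)=0$ for $0\le j<d$. Multiplying the series identity by $t^d$ then gives
$$
E_\cP(t)=t^{-d}E_{\inte\overline\cP}(t).
$$
Classical Ehrhart--Stanley reciprocity applied to the closed $(d-1)$-dimensional polytope $\overline\cP$ yields $E_{\inte\overline\cP}(t)=(-1)^d E_{\overline\cP}(t^{-1})$, and substituting produces the formula of (2).

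Part (3) is a direct substitution from (2) into the given rational form. Multiplying numerator and denominator of $E_{\overline\cP}(t^{-1})$ by $\prod t^{g_i}$ converts each factor $1-t^{-g_i}$ into $-(1-t^{g_i})$, so the $d$ sign changes cancel the $(-1)^d$ in (2). The resulting numerator is $t^{G-d}h(t^{-1})$ with $G=g_1+\cdots+g_d$, and the identity $t^s h(t^{-1})=h_s+h_{s-1}t+\cdots+h_0 t^s$ rewrites this as the reversed polynomial shifted by $w=G-d-s$, which is exactly the claimed numerator. The only delicate point in the whole argument is the vanishing $E(\inte\overline\cP,j)=0$ for $j<d$, which is what makes the shift by $d$ clean at the level of generating functions; everything else is bookkeeping over the classical reciprocity statement.
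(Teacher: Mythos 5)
Your proof is correct and follows essentially the same route as the paper's: the translation bijection $x\mapsto x+\one$ (using $\lambda_i(\one)=0$) identifies lattice points of $D$ with interior lattice points of $C$, the resulting degree shift by $d$ combines with classical Ehrhart reciprocity for $\overline\cP$ to give (2), and (3) is the same elementary manipulation of the rational expression. You merely spell out two details the paper leaves implicit, namely the dilation-level bookkeeping $E(\cP,k)=E(\inte\overline\cP,k+d)$ and the vanishing $E(\inte\overline\cP,j)=0$ for $j<d$.
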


\begin{proof}
The crucial observation is (1). Since $\dim \cP=d-1$, the interior of $C$ is
$$
\{x: x_i>0,\ i=1,\dots,d,\ \lambda_j(x)>0,\ j=1,\dots,m\}.
$$
For lattice points $x\in\ZZ^d$ these inequalities amount to $x_i\ge 1$
and $\lambda_j(x)\ge 1$. Thus $x$ belongs to the interior of $C$ if and
only if $x-\one$ satisfies the inequalities $x_i\ge 0$ for
$i=1,\dots,d$, and $\lambda_j(x)>0$ for $j=1,\dots,m$. This proves $J=I-\one$.

By Ehrhart reciprocity (for example, see \cite[Th. 6.51]{BG}) the
Ehrhart series of the interior of $\overline\cP$ is
$(-1)^dE_{\overline\cP}(t^{-1})$. In view of (1) we have to multiply
this series by $t^{-d}$ to obtain the Ehrhart series of $\cP$. This gives (2).

Part (3) is now an elementary transformation.
\end{proof}

\begin{remark}
(a) The condition $\lambda_i(\one)=0$ in Theorem \ref{reciprocity} is equivalent to the natural assumption that it only depends on the differences $v_i-v_j$ whether a voting result $(v_1,\dots,v_d)$ belongs to the event defined by the inequalities.

(b) We have formulated Theorem \ref{reciprocity} for the grading by
total degree. It can easily be generalized to other gradings.
\end{remark}

In the case of our polytopes we have $d=24$. So, for $\cC$ we obtain
$\sum_{i=1}^d g_i-d-s=65-24-40=1$, as observed. Theorem \ref{reciprocity}
is applicable to all polytopes in Section \ref{Volumes Computations} with
the exception of $\cF$. Nevertheless we have computed both Ehrhart series
in each case since the comparison is an excellent test of the Normaliz
algorithm. For $\cF$ the formula in Theorem \ref{reciprocity} does indeed
not hold.
\bigskip

The Ehrhart quasipolynomials of $\cC$ and $\overline{\cC}$
have period $4$. Moreover, they are equal for an odd number of voters
$k$ and have the same expression
$$
\#(\cC_k\cap\ZZ^N)=\#(\overline{\cC}_k\cap\ZZ^N)=
\frac{R_{1,3}(k)*(12 + k)*\prod_{i=1, i \text{ odd }}^{23} (i+k)
}{23!*131072}
$$
(for  $k\equiv 1,3 \text{ mod }4$), where
\begin{align*}
R_{1,3}(k)&=261812975764725 +308449567353120 k + 165347938576012 k^2 \\
&+ 50600971266720 k^3 + 9607752151310 k^4+ 1183838427360 k^5\\
&+ 96296973756 k^6+5130593760 k^7 + 172122725 k^8 + 3296640 k^9 + 27472
k^{10}.
\end{align*}

Let us reformulate this result in terms of probabilities. With the notation introduced at the beginning of Section \ref{Volumes Computations},
we have
$$
p_{\CW}^{(4)}(k)=4p_{\CW}^{(4,1)}(k),
$$
where
$$
p_{\CW}^{(4,1)}k(k)=\frac{\#(\cC_k\cap\ZZ^N)}{\#(\cU_k\cap \ZZ^N)}.
$$
Since
$$
\#(\cU_k\cap \ZZ^N)=\frac{\prod_{i=1}^{23} (i+k) }{23!}
$$
we get
$$
p_{\CW}^{(4)}(k)=\frac{R_{1,3}(k)*(12 + k)}{32768*\prod_{i=1, i \text{ even
}}^{23} (i+k)} \quad \text{ if }k\equiv 1,3 \text{ mod }4.
$$
This is exactly the same formula as the one computed first by
Gehrlein in \cite{G}. For the case of even $k$ we set
{\scriptsize
\begin{align*}
R_{0}(k)&=4981367114669230129152000 + 11069309139290261311979520 k+
11286725167650172468985856 k^2 \\
&+ 6970525765323041332002816 k^3 + 2896901556002851225731072 k^4 +
857336679021412589010944 k^5 \\
&+ 187293111169997407690752 k^6 + 30935327102400429176832 k^7 +
3923664152075008433664 k^8 \\
&+ 385511913998009006208 k^9 + 29422431828810359328 k^{10} +
1738486466127164288 k^{11} +\\
&+78715287099505056 k^{12} + 2678620940814672 k^{13} + 66260942646564
k^{14} \\
&+ 1124326347564 k^{15}+ 11698573833 k^{16} + 56262656 k^{17}
\end{align*}}
and
{\scriptsize
\begin{align*}
R_{2}(k)&=9794451243189989376000 + 921057250987916963020800 k +
1705900639387417842032640 k^2 \\
&+ 1489106767895973053595648 k^3 + 792353026020511342854144 k^4 +
284373446368099671547904 k^5 \\
&+ 72772788665361422238720 k^6 + 13747699097527641501696 k^7 +
1960073323091557035648 k^8 \\
&+ 213683286033339310848 k^9 + 17913763440866689440 k^10 +
1153396601212907264 k^11\\
&+ 56538334354261872 k^12 + 2071748534241792 k^13 + 54936786331200 k^14 \\
&+ 995421043392 k^15 + 11023421961 k^16 + 56262656 k^17,
\end{align*}}
then we get
$$
p_{\CW}^{(4)}(k)=
\begin{cases}
\frac{R_{0}(k)*k}{67108864*\prod_{i=0}^{5} (1+4*i+k)(2+4*i+k)(3+4*i+k)}
\quad \text{ if }k\equiv 0 \text{ mod }4;\\
  \\
\frac{R_{2}(k)*k}{67108864*\prod_{i=0}^{5} (4*i+k)(1+4*i+k)(3+4*i+k)}
\quad \quad \text{ if }k\equiv 2 \text{ mod }4.\\
\end{cases}
$$
To the best of our knowledge the above formula for an even number of voters
has not been computed before our computations with Normaliz.

\begin{remark} With the notation used in \cite[Formula 1.27 and
1.29]{GL}, we have
$$
p_{\CW}^{(4)}(k)=P_{\textup{PMRW}}^S(4,k,\textup{IAC}) \quad \text{ for all } k\in\ZZ_{+}.
$$
\end{remark}

For the other eight polytopes, since the numerators of the Ehrhart series are very long, we only list the denominators for a representation similar to the one given above for $\cC$ and $\overline{\cC}$ (with non-coprime numerators):
\begin{align*}
\cQ,\overline{\cQ}:&\ (1-t)(1-t^2)^2(1-t^4)^5(1-t^{12})^{16},\\
\cE,\overline{\cE},
\cF,\overline{\cF},
\cB_{\Sg},\overline{\cB_{\Sg}}:&\ (1-t)(1-t^2)^2(1-t^4)^5(1-t^{12})^4(1-t^{24})(1-t^{120})^{11},\\
\cT,\overline{\cT}:&\ (1-t)(1-t^2)^{14}(1-t^4)^5(1-t^{12})^{3}(1-t^{24}),\\
\cK,\overline{\cK}:&\ (1-t)(1-t^2)^{14}(1-t^4)^5(1-t^{12})^{4},\\
\cB_{\St},\overline{\cB_{\St}}:&\ (1-t)(1-t^2)^2(1-t^{4})^5(1-t^{12})^4(1-t^{24})(1-t^{120})^4\\
                                         &\ (1-t^{840})^2(1-t^{2520})^2(1-t^{27720})^{2}(1-t^{55440}),\\
\cB_{\SgRev},\overline{\cB_{\SgRev}}:&\ (1-t)(1-t^2)^{2}(1-t^4)^6(1-t^{12})^{3}(1-t^{24})^{12}.
\end{align*}
The denominators have $24$ factors. All computed data is available and will be provided by the authors on request.

The reciprocity between $E_\cP(t)$ and $E_{\overline \cP}$ in Theorem \ref{reciprocity} can be recast into a relation between the Ehrhart quasipolynomials. In terms of quasipolynomials, Ehrhart reciprocity says $q_{\relint \overline\cP}(k)=(-1)^{d-1}q_{\overline\cP}(-k)$ for all $k\ge 1$ (for example, see \cite[Th. 6.51]{BG}), and in view of Theorem \ref{reciprocity} this implies
$$
q_\cP(\ell)=(-1)^{d-1}(-\ell-d).
$$
It follows that under the conditions of Theorem \ref{reciprocity} one
has $E(\cP,k)=q_\cP(k)$ for all $k>-d$, and therefore for all $k>0$.

\begin{remark}\label{order}
(a) In Table \ref{numerator_data} we summarize the essential data of the numerators of the Ehrhart series of the polytopes (with the exception of $\cF$), according to the notations introduced in Theorem \ref{reciprocity}.
The last column represents the period of the associated Ehrhart quasipolynomials.

\begin{table}[hbt]
\centering
\begin{tabular}{|r|r|r|r|r|r|}
\hline
\rule[-0.1ex]{0ex}{2.5ex}
Polytope & $d$ & $s$ & $\sum g_i$ & $w$ & period \\ \hline
\strut $\cC$          & 24 &     40 &     65 & 1 & 4 \\ \hline
\strut $\cQ$          & 24 &    190 &    217 & 3 & 12\\ \hline
\strut $\cE$          & 24 &   1392 &   1417 & 1 & 120\\ \hline
\strut $\cT$          & 24 &     84 &    109 & 1 & 24\\ \hline
\strut $\cK$          & 24 &     70 &     97 & 3 & 12\\ \hline
\strut $\cB_{\St}$    & 24 & 118144 & 118177 & 9 & 55440\\ \hline
\strut $\cB_{\Sg}$    & 24 &   1388 &   1417 & 5 & 120\\ \hline
\strut $\cB_{\SgRev}$ & 24 &    326 &    353 & 3 & 24\\ \hline
\end{tabular}
\vspace*{2ex} \caption{Data for the numerators of the Ehrhart series and quasipolynomials}\label{numerator_data}
\end{table}

(b) The numerator of $\overline{\cF}$ is a polynomial of degree $1386$ whereas the numerator of $\cF$ has degree $1389$.
They are not related by Theorem \ref{reciprocity}, so the Ehrhart series must be computed separately by Normaliz.
The Ehrhart quasipolynomials of $\cF$ and
$\overline{\cF}$  have period $120$.

(c) The number $w$ in Table \ref{numerator_data} is the smallest number of voters that can realize the respective election outcome since it is the lowest exponent of $t$ that appears in the Ehrhart series (when written as a power series).  Note the different values of $\cB_{\Sg}$ and $\cB_{\SgRev}$ for $w$.

The values of $w$ can be checked by elementary arguments, independently from the Ehrhart series computation.  We explain it for the strong Borda paradox. Let candidate 1 be the plurality winner. Then he has got more first places, say $m$, than any of the three other candidates. On the other hand, he is the Condorcet loser, and therefore there must be behind any of the other candidates in at least $m+1$ preference orders. Another candidate, for example 2, gets first place in at least one of these $m+1$ preference orders. Then $m>1$, so $m\ge 2$. For the total number of voters $k$ it follows that $k \ge 2m+1\ge 5$. But the strong Borda paradox can be realized by $5$ voters: two of them place candidate 1 first, and the other three place the different other 3 candidates first, but candidate 1 last. Note that the same argument works for any number of candidates, with the exception of the three candidates situation where the minimal number of voters is 7. Similarly one can see that, for four candidates elections, the reverse strong Borda paradox requires only 3 voters and the strict Borda paradox needs 9 of them.

\end{remark}

\section{The exploitation of symmetry}\label{Exploit}

The elegant approach of Schürmann in \cite{Sch} for the
computation of the volumes of $\cC$ and variants of $\cQ$ and
$\cE$ uses the high degree of symmetries of these
polytopes. Suppose that a polytope $P\subset \RR^d$ is defined by the sign inequalities $x_i\ge 0$, $i=1,\dots,d$, and further inequalities $\lambda_i(x)\ge 0$ (or $>0$).
If certain variables $x_{i_1},\dots,x_{i_u}$ occur
in all of the linear forms $\lambda_i$ with the same coefficient (that may depend on $i$), then any permutation of them acts as a symmetry on
the corresponding polytope, and the variables
$x_{i_1},\dots,x_{i_u}$ can be replaced by their sum
$y_j=x_{i_1}+\dots+x_{i_u}$ in the $\lambda_i$. (The polytopes may have further
symmetries.) The substitution can be used for a projection into
a space of much lower dimension, mapping the polytope $P$
to a polytope $Q$ (this requires that the grading
affine hyperplane $A_1$ is mapped onto an affine hyperplane by
the projection). Instead of counting the lattice points in $kP$
one counts the lattice points in $kQ$ weighted with their
number of preimage lattice points in $kP$. This amounts to the
consideration of a \emph{weighted Ehrhart function}
$$
k\mapsto \sum_{y\in kQ\cap\ZZ^d} f(y).
$$
The polynomial $f$ is easily determined by elementary combinatorics: if $y_j$, $j=1,\dots,m$,  is the sum of  $u_j$ variables $x_i$, then
$$
f(y)=\binom{u_1+y_1-1}{u_1-1}\cdots \binom{u_m+y_m-1}{u_m-1}.
$$

The theory of weighted Ehrhart functions has recently been
developed in several papers; see Baldoni et al. \ \cite{BB1},
\cite{BB2}, \cite{BS}. In \cite{Sch}, only the leading form of the polynomial $f$ is
used. Integration of the leading form with respect to Lesbesgue measure yields
the volume.

In the case of the Condorcet polytope $\cC$ for four candidates the symmetrization yields a polytope of dimension $7$: there are two groups of $6$ variables each that can be replaced by their sums, and 6 groups of two variables each. We leave it to the reader to spot them. Fortunately Normaliz finds them automatically. The polynomial $f$, also computed automatically, is
$$
f(y)=\binom{y_1+5}{5}(y_2+1)(y_3+1)(y_4+1)(y_5+1)(y_6+1)(y_7+1)\binom{y_8+5}{5}
$$
in this case. Among our polytopes, only  $\cT$ and $\cB_{\St}$ do not allow any symmetrization.

Version 3.2.1 of Normaliz calls its offspring NmzIntegrate behind the scenes for the symmetrized computation of volumes and weighted Ehrhart series. In version 3.3.0, NmzIntgerate will be included in Normaliz itself and no longer exist as a separate program.
The algorithmic approach of NmzIntegrate is developed in \cite{BS}. For polynomial arithmetic Normaliz uses CoCoALib by Abbott, Bigatti and Lagorio \cite{CoCoA}.

\section{Computational report}\label{CompRep}
In this section we want to document the use of Normaliz 3.2.1 and computations performed with it during the preparation of this work.

\subsection{Use of Normaliz}
Normaliz is distributed as open source under the GPL. In
addition to the source code, the distribution contains
executables for the major platforms Linux, Mac and Windows.
We include some details on the use of Normaliz in order to show that the input files have a transparent structure and that the syntax of the execution command is likewise simple.

The polytope $\cF$ has the following input file:
{\small
\begin{verbatim}
amb_space 24
excluded_faces 5
1 1 1 1 1 1   -1 -1 -1 -1 -1 -1    1  1 -1 -1  1 -1    1  1 -1 -1  1 -1
1 1 1 1 1 1    1  1 -1 -1  1 -1   -1 -1 -1 -1 -1 -1    1  1  1 -1 -1 -1
1 1 1 1 1 1    1  1  1 -1 -1 -1    1  1  1 -1 -1 -1   -1 -1 -1 -1 -1 -1

1 1 1 1 1 1    0  0   0 0  0  0   -1 -1 -1 -1 -1 -1    0  0  0  0  0  0
1 1 1 1 1 1    0  0   0 0  0  0    0  0  0  0  0  0   -1 -1 -1 -1 -1 -1

inequalities 1
-1 -1 -1 -1 -1 -1    1  1 1 1 1 1     0 0 0 00 0     0 0 0 0 0 0
nonnegative
total_degree
\end{verbatim}
}

The first line \verb|amb_space 24| sets the ambient space to $\RR^{24}$. The $5$ \verb|excluded_faces| represent the strict inequalities $\lambda_i>0$, $i=1,2,3,5,6$ of Table \ref{ineqadded}, whereas the non-strict inequality $\overline{-\lambda_4}(v)\ge 0$ is given by $1$ \verb|inequalities|.
The keyword \verb|nonnegative| indicates that all $24$ coordinates are to be taken nonnegative, whereas \verb|total_degree| defines the grading in which each coordinate has weight $1$.
Let us suppose the file is called \verb|CEP.in|.

For Normaliz 3.2.1 and newer versions the  simplest command for the computation is
\begin{verbatim}
normaliz CEP
\end{verbatim}
Depending on the installation, it may be necessary to prefix \verb|normaliz| or \verb|CEP| by a path. Often one adds the option \verb|-c| to get terminal output showing the progress of the computation. If no further option is added, Normaliz will compute the Hilbert series and the Hilbert basis. The computation of the latter is very fast in all cases of this paper, but one can suppress it by the option \verb|-q|. If one is only interested in the volume, \verb|-v| is the right choice. The number of parallel threads can be limited via the option \verb|-x=<N>| where \verb|<N>| stands for the number of threads. The computation results are contained in \verb|CEP.out|.

As will be apparent from the terminal output (obtained with \verb|-c|), Normaliz successfully tries symmetrization, and employs its companion NmzIntegrate as mentioned out above.
One should note that Normaliz does automatic symmetrization only if the cone $D$ defined by image $\cQ$ has dimension $\le 2\dim(C)/3$ where $C$ is the cone over the polytope $\cP$ to be computed. The bound has been introduced since  one cannot expect a saving in computation time if the dimension does not drop enough. However, the user can force Normaliz to use symmetrization.

\begin{remark}
(a) Normaliz has an input type \verb|strict_inequalities|. While it seems a natural choice and will yield the same results as \verb|excluded_faces|, its use for the computations of this paper is not advisable since the algorithmic approach does not (yet) allow symmetrization, and even for cases without symmetrization it is usually significantly slower than \verb|excluded_faces|.

(b) A graphical interface called jNormaliz (Almendra and Ichim \cite{AI}) is also available in the Normaliz package. For its use Java must be installed on the system.
\end{remark}

\subsection{Overview of the examples}

The columns of Table \ref{data} contain the values of
characteristic numerical data of the examples studied, namely:
$\#$vertices is the number of the vertices of the polytope, and $\#$supp the number of its support
hyperplanes. $\#$Hilb is the size of the Hilbert basis of the Ehrhart cone over the polytope (see \cite{BI} for more details). These data are invariants of the polytope.

\begin{table}[hbt]
\centering
\begin{tabular}{|r|r|r|r|r|r|}\hline
\rule[-0.1ex]{0ex}{2.5ex}Polytope & $\#$vertices & $\#$supp& $\#$Hilb & $\#$ triangulation & $\#$ Stanley dec\\ \hline
\strut $\cC$          &   234 & 27 &    242 &       1,344,671 & 1,816,323\\ \hline
\strut $\cQ$          & 2418  & 28 & 12,948 & 343,466,918,256 & 2,217,999,266,634\\ \hline
\strut $\cE$          & 4644  & 30 & 31,308 & 464,754,352,804 & 1,661,651,089,155\\ \hline
\strut $\cF$          & 4572  & 30 & 26,325 & 1,009,992,718,827 & 3,400,149,589,030\\ \hline
\strut $\cT$          & 491   & 30 &  546   & 2,852,958 & 5,635,927\\ \hline
\strut $\cK$          & 262   & 28 & 362    & 1,346,894 & 2,694,560\\ \hline
\strut $\cB_{\St}$    & 6363  & 33 & 21,137 & 30,399,162,846 & 75,933,588,203\\ \hline
\strut $\cB_{\Sg}$    & 3216  & 30 & 24,816 & 149,924,230,551 & 858,660,657,413\\ \hline
\strut $\cB_{\SgRev}$ & 3432  & 30 & 9,548  & 366,864,865,269 & 1,141,025,866,136\\ \hline
\end{tabular}
\vspace*{2ex} \caption{Numerical data of test
examples}\label{data}
\end{table}

The last two columns list the number of simplicial cones in the
triangulation and the number of components of the Stanley
decomposition (see \cite{BIS} for details on these numbers). These data are not invariants of the polytope.
The information is included to show the complexity of the computations if symmetrization is not used.
Normaliz can do all computations without symmetrization, but then some of them will take days,
even those with a high degree of symmetry.
The size of the lexicographic triangulation depends on the
order in which the extreme rays are processed. The polytopes in
the table above are defined by their support hyperplanes, and
therefore Normaliz first computes the extreme rays from them.
The order used in the computations mentioned in Table
\ref{data} is not necessarily identical with the order
produced by previous versions of Normaliz. Moreover,
bottom decomposition, see \cite{BSS}, is used automatically if the ratio
of the largest degree among the generators and the smallest is $\ge 10$.
This further influences the data contained in the last two columns.

Of all our polytopes, only  $\cT$ and $\cB_{\St}$ cannot be symmetrized. The combinatorial data of the symmetrized polytopes are contained in Table \ref{symm_data}.

\begin{table}[hbt]
\centering
\begin{tabular}{|r|r|r|r|r|r|}\hline
\rule[-0.1ex]{0ex}{2.5ex}Polytope & dim &$\#$vertices & $\#$supp& $\#$
triangulation & $\#$ Stanley dec\\ \hline
\strut $\cC$          &  8 &  16 & 11 & 17 &    33\\ \hline
\strut $\cQ$          &  6 &  12 &  8 & 14 &    14 \\ \hline
\strut $\cE$          & 13 & 170 & 18 & 18208 & 19999\\ \hline
\strut $\cF$          & 13 & 163 & 18 & 23738 & 41963\\ \hline
\strut $\cK$          & 14 &  63 & 18 & 1035 &  2070\\ \hline
\strut $\cB_{\Sg}$    & 13 & 100 & 18 & 3696 &  6025\\ \hline
\strut $\cB_{\SgRev}$ & 13 & 115 & 19 & 10342 & 26024\\ \hline
\end{tabular}
\vspace*{2ex} \caption{Numerical data of symmetrized
polytopes}\label{symm_data}
\end{table}

We remark that, for Hilbert basis computations, the dual algorithm of Normaliz (see \cite{BI}) is much faster than the primal
algorithm for the examples of this paper, and all computations run in a few seconds.
This is by no means always the case (see \cite{BIS}).

\subsection{Hardware characteristics}

Almost all computations were run on a compute server with operating system CentOS 7.3, 4 Intel
Xeon  E5-2660 at 2.20GHz (a total of 32 cores) and 192 GB of
RAM. With the exception of $\cB_{\St}$, all computations were also done on a standard laptop with operating system Ubuntu 16.04, an Intel i5-4200M CPU at 2.5 Ghz and 12 GB RAM. In parallelized computations we
have limited the number of threads used to $30$.  In
Tables \ref{vol_data} and \ref{ES_data}, \ttt{4x} and \ttt{30x} indicates parallelization with $4$ and $30$ threads, respectively. As the large
examples below show, the parallelization scales efficiently; see also Table \ref{eff_parall}. The version that we have used exchanges data via files.
The laptop has an SSD , but the server has only hard disks, and it is not a local hard disk of the machine,
but the files must go through the NFS, the network file system.

Normaliz and NmzIntegrate need relatively little memory. All computations
mentioned in this paper run stably with $< 0.5$ GB of RAM for each thread used.

\subsection{Volumes}

Table \ref{vol_data} contains the computation times for the
volumes of the studied polytopes.
The input for all these examples is given in the form of inequalities. When one runs Normaliz
on such examples, it first computes the extreme rays of the cone
and uses them as generators. This small  extra time is also included in the reported times below. (It is also apparent and not surprising that small examples profit from a small number of parallel threads.)

\begin{table}[hbt]
\centering
\begin{tabular}{|r|c|r|r|}
\hline
\rule[-0.1ex]{0ex}{2.5ex}
Polytope & Symmetrize & Laptop \ttt{4x} & Server \ttt{30x} \\ \hline
\strut $\cC$          &   Yes & 0.100 s    & 0.591 s\\ \hline
\strut $\cQ$          &   Yes & 0.33 s     & 0.76 s\\ \hline
\strut $\cE$          &   Yes & 1:11:39 h  & 8:41 m\\ \hline
\strut $\cF$          &   Yes & 1:48:57 h  & 15:06 m\\ \hline
\strut $\cT$          &    No & 7.200 s    & 10.455 s\\ \hline
\strut $\cK$          &   Yes & 0.660 s    & 1.940 s\\ \hline
\strut $\cB_{\St}$    &    No & --         & 3:57:26 h\\ \hline
\strut $\cB_{\Sg}$    &   Yes & 14:51 m    & 1:39 m\\ \hline
\strut $\cB_{\SgRev}$ &   Yes & 44:54 m    & 4:17 m\\ \hline
\end{tabular}
\vspace*{2ex} \caption{Computation times for volumes}\label{vol_data}
\end{table}

In order to measure the parallelization we have run the volume
computation of $\cE$  with varying number of threads. Table
\ref{eff_parall} shows that NmzIntegrate is very efficiently parallelized.
\begin{table}[hbt]
\centering
\rule[-0.1ex]{0ex}{2.5ex}
\begin{tabular}{|c||r|r|r|r|r|}
\hline
\strut\# threads & 1 & 5 & 10 & 20 & 30\\
\hline
\strut real time s & 10367 & 2380 & 1245 & 656 & 521 \\
\hline
\strut efficiency \% & 100 & 88 & 83 & 79 & 66 \\
\hline
\end{tabular}
\vspace*{2ex} \caption{Efficiency of parallelization in volume computations}\label{eff_parall}
\end{table}

\begin{remark}\label{remark_ASch} The volume of the polytope $\cC$ was first computed by Gehrlein \cite{G}. The volumes of variants of the polytopes $\cQ$ and $\cE$ had been
computed by Schürmann \cite{Sch} with LattE integrale \cite{LatInt}. This information
was very useful for checking the correctness of Normaliz.
\end{remark}

\subsection{Ehrhart series and quasipolynomials} The experimental times obtained
for computation of Ehrhart series and quasipolynomials are contained in Table \ref{ES_data}.
As above, the presented times include the time used by Normaliz for computing the extreme rays of the cone.
Moreover, the Ehrhart quasipolynomials are computed from the Ehrhart series (see \cite{BI}).
This requires for some examples like  $\cB_{\St}$ a significant extra time, which has likewise been included. We have also measured the parallelization for the Ehrhart series computation of $\cB_{\St}$; see Table \ref{Ehrhart parallel}. Somewhat surprisingly, the efficiency is $>100\%$ for certain numbers of threads, an effect that can only be explained by the memory management of the system.

\begin{table}[hbt]
\centering
\begin{tabular}{|r|r|r|r|r|r|}
\hline
\rule[-0.1ex]{0ex}{2.5ex}
Polytope & Symmetrize & \multicolumn{2}{ |c| }{Laptop \ttt{4x}} & \multicolumn{2}{ |c| }{Server \ttt{30x}} \\
\cline{3-6}
         &  & closed & semi-open  & closed & semi-open \\ \hline
\strut $\cC$          &   Yes & 1.730 s    & 1.940 s    & 1.925 s    & 2.077 s    \\ \hline
\strut $\cQ$          &   Yes & 4.400 s    & 7.64 s     & 7.010 s    & 8.440 s    \\ \hline
\strut $\cE$          &   Yes & 4:50:55 h  & 4:45:24 h  & 28:36 m    & 41:01 m    \\ \hline
\strut $\cF$          &   Yes & 12:02:42 h & 12:47:03 h & 1:45:15 h  & 1:39:19 h \\ \hline
\strut $\cT$          &    No & 16.230 s   & 28.260 s   & 24.050 s   & 34.136 s   \\ \hline
\strut $\cK$          &   Yes & 16.770 s   & 25.810 s   & 3.156 s    & 7.967 s    \\ \hline
\strut $\cB_{\St}$    &    No & --         & --         & 10:08:50 h & 37:03:26 h \\ \hline
\strut $\cB_{\Sg}$    &   Yes & 1:34:23 h  & 1:36:16 h  & 9:01 m     & 14:13 m    \\ \hline
\strut $\cB_{\SgRev}$ &   Yes & 5:56:18 h  & 5:53:38 h  & 45:13 m    & 47:22 m    \\ \hline
\end{tabular}
\vspace*{2ex} \caption{Computation times for Ehrhart series and quasipolynomials}\label{ES_data}
\end{table}

\begin{table}[hbt]
\centering
\rule[-0.1ex]{0ex}{2.5ex}
\begin{tabular}{|c||r|r|r|r|r|}
\hline
\strut\# threads & 1 & 5 & 10 & 20 & 30\\
\hline
\strut real time s & 15959 & 398 & 1230 & 635 & 553 \\
\hline
\strut efficiency \% & 100 & 94 & 130 & 126 & 96 \\
\hline
\end{tabular}
\vspace*{2ex} \caption{Efficiency of parallelization in Ehrhart series computations}\label{Ehrhart parallel}
\end{table}

\section{Acknowledgement}

The authors like to thank  Achill Sch\"{u}rmann for several test examples that were used during the
development of Normaliz and NmzIntegrate.

\end{document}